\documentclass{article}
\usepackage{titlesec}
\usepackage[margin = 0.96in]{geometry}
\usepackage{amsmath, amssymb, mathtools, amsfonts, amsthm, bbm}
\usepackage{xcolor}
\newtheorem{thm}{Theorem}[section]
\newtheorem{lem}{Lemma}[section]
\newtheorem{prop}{Proposition}[section]
\usepackage{hyperref}

\titleformat*{\section}{\large\bfseries}

\hypersetup{
	colorlinks=true,
	linkcolor=blue,
	filecolor=magenta,      
	urlcolor=cyan,
}

\renewcommand{\P}{\Phi}
\newcommand{\Z}{\mathbb{Z}}
\renewcommand{\l}{\langle}
\renewcommand{\r}{\rangle}
\newcommand{\GL}{\mathrm{GL}}
\newcommand{\Ind}{\mathrm{Ind}}
\newcommand{\ord}{\mathrm{ord}}

\newcommand{\Res}{\mathrm{Res}}

\title{Non-Induced Representations of Finite Cyclic Groups}
\author{Ramanujan Srihari\thanks{\textsc{Department of Mathematical Sciences, Indian Institute of Science Education and Research, Mohali, India}. \emph{Email address}: \texttt{ramanujan.srihari@gmail.com}.}}
\date{}

\begin{document}

\maketitle

\begin{abstract}
	Let $K$ be an algebraically closed field of characteristic $0$ and let $G$ be a finite cyclic group of order $n$. In this note we prove, using induction on the number of prime divisors of $n$, that $R_K(G)/ I \cong \Z[X]/\l\Phi_n(X)\r$ where $R_K(G)$ denotes the ring of $K$-representations of $G$ and $I$ is the sum of ideals $\Ind_H^G(R_K(H))$ of $R_K(G)$ as $H$ varies over all proper subgroups of $G$. This gives us an idea of how many representations of $G$ are \emph{not} induced from representations of a proper subgroup.
\end{abstract}

\section{Introduction}

Let $K$ be a field and let $G$ be an Abelian group. By a $K$-\emph{representation} of $G$ we mean a pair ($\rho$, $V$) where $V$ is a $K$-vector space and $\rho: G \to \GL(V)$ is a group homomorphism. Moreover, $(\rho, V)$ is said to be \emph{finite dimensional} if $V$ is a finite dimensional $K$-vector space. Two representations $(\rho_1, V_1)$ and $(\rho_2,V_2)$ are said to be isomorphic if there exists a $K$-linear isomorphism $\varphi: V_1 \to V_2$ such that $\varphi\circ\rho_1 = \rho_2\circ\varphi$. (In fact if $\dim_KV = n$ and if we choose a basis for $V$, then saying that the two representations $\rho_1$ and $\rho_2$ are isomorphic is the same as saying that the matrices corresponding to $\rho_1$ and $\rho_2$ are conjugate over $\GL_n(K)$.) It is easily verified that the relation of isomorphism on the set of all $K$-representations of $G$ defines an equivalence relation. For a representation $(\rho, V)$ of $G$ we let $[(\rho, V)]$ denote the isomorphism class of $(\rho, V)$. 

\

Let $F_K(G) = \bigoplus_{(\rho, V)} \Z[(\rho, V)]$ be the free Abelian group generated by the isomorphism classes of finite dimensional irreducible $K$-representations of $G$, and let $S_K(G) \subseteq F_K(G)$ be the subgroup generated by the elements $[(\psi_1, V_1)] + [(\psi_2,V_2)] - [(\psi_1\oplus\psi_2, V_1\oplus V_2)]$ as $\psi_1$ and $\psi_2$ vary over all finite dimensional irreducible $K$-representations of $G$. Then the \emph{representation ring} $R_K(G)$ of $G$ is defined by
\begin{equation*}
	R_K(G) := \frac{F_K(G)}{S_K(G)}.
\end{equation*}
Although \emph{a priori} $R_K(G)$ is just an Abelian group, we can equip it with a product `$\times$': for two isomorphism classes $[(\rho_1, V_1)]$ and $[(\rho_2, V_2)]$, define $[(\rho_1, V_1)]\times[(\rho_2, V_2)] = [(\rho_1\otimes\rho_2, V_1\otimes V_2)]$, and extend this product $\Z$-linearly to $R_K(G)$. (Here and in the sequel, $\otimes$ denotes tensor product over $K$.) That this product is well defined in $R_K(G)$ follows from the fact that tensor product distributes over direct sum. From the same reason it also follows that equipped with this product, $R_K(G)$ forms a commutative ring with identity $[(\mathbbm{1}_G, K)]$, the class of the trivial representation of $G$.

\

We recall that given a $K$-representation $(\rho, V)$ of $G$, we may view $V$ as a module over the commutative algebra $K[G]$. Indeed, any $g \in G$ acts on a vector $v \in V$ as $v \mapsto \rho(g)(v)$. Conversely, given a $K[G]$-module $V$ we obtain a representation $\rho: G \to \GL(V)$ by defining $\rho(g)(v) = g\cdot v$. Thus in what follows, we interchangeably use $\rho$ or $V$ to denote the representation $(\rho, V)$, where we view $V$ as a $K[G]$-module via the action of $\rho$.

\

Given a subgroup $H$ of $G$, we view $K[H]$ as a $K$-subalgebra of $K[G]$ via the natural inclusion $H \hookrightarrow G$. Thus given a representation $V$ of $G$, we may view $V$ as a $K[H]$-module, and therefore as a representation of $H$. We denote this representation of $H$ by $\Res^G_H(V)$. Conversely to a representation $W$ of $H$ we may associate a representation of $G$ by defining
\begin{equation*}
	\Ind_H^G(W) := W\otimes_{K[H]} K[G].
\end{equation*}
In other words, $\Ind_H^G(W)$ is obtained by base change of the $K[H]$-module $W$ to $K[G]$. Since $\Ind_H^G(W_1\oplus W_2) \cong \Ind_H^G(W_1)\oplus \Ind_H^G(W_2)$, we obtain a well-defined map at the level of representation rings (which, by abuse of notation, we also denote by $\Ind_H^G$) by extending $\Ind_H^G$ $\Z$-linearly, $\Ind_H^G: R_K(H) \to R_K(G)$. It is easily seen that $\Ind_H^G$ is a homomorphism of Abelian groups. Furthermore, for any representations $V$ of $G$ and $W$ of $H$, we have the following $K[G]$-module isomorphisms.
\begin{eqnarray}
V\otimes \Ind_H^G(W) & = & V\otimes (W\otimes_{K[H]} K[G])\nonumber \\
& \cong & (V\otimes W)\otimes_{K[H]} K[G] \nonumber \\
& \cong & (\Res^G_H(V)\otimes W)\otimes_{K[H]} K[G] \nonumber \\
& = & \Ind_H^G(\Res^G_H(V)\otimes W). \nonumber 
\end{eqnarray}
This shows that the image $\Ind_H^G(R_K(H))$ is in fact an ideal in $R_K(G)$. Thus we may form the quotient ring $R_K(G)/\Ind_H^G(R_K(H))$, which can be thought of as a measure of the representations of $G$ which are \emph{not} induced from $H$. 

\

In this note we wish to get a sense of how many representations of a finite cyclic group are not induced from representations of a proper subgroup. In other words, we wish to understand the ring
\begin{equation*}
	\frac{R_K(G)}{\sum_{H < G} \Ind_H^G(R_K(H))}
\end{equation*}
where $G$ is a finite cylic group and as $H$ ranges over all proper subgroups of $G$. To that end we have the following result.

\begin{thm}\label{mainthm}
Let $G$ be a finite cyclic group of order $n$, and let $K$ be an algebraically closed field of characteristic $0$. Then 
\begin{equation}
\frac{R_K(G)}{\sum_{H < G} \Ind_H^G(R_K(H))} \cong \frac{\Z[X]}{\l\P_n(X)\r}
\end{equation}
where $\P_n$ denotes the $n^\mathrm{th}$ cyclotomic polynomial.
\end{thm}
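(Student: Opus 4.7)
Since $K$ is algebraically closed of characteristic $0$, it contains a primitive $n$-th root of unity $\zeta$, so the irreducible $K$-representations of $G = \l g \r$ are the one-dimensional characters $\chi_k : g \mapsto \zeta^k$ for $0 \le k \le n-1$, and the assignment $[\chi_1] \mapsto X$ yields a ring isomorphism $R_K(G) \cong \Z[X]/\l X^n - 1\r$. For the unique subgroup $H \le G$ of order $d$ (with $d \mid n$), a direct computation (or Frobenius reciprocity) gives $\Ind_H^G([\psi_k]) = \sum_{j \equiv k \pmod d} [\chi_j] = X^k \cdot (X^n - 1)/(X^d - 1)$, so $\Ind_H^G(R_K(H))$ is the principal ideal generated by $f_d := (X^n - 1)/(X^d - 1)$. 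The theorem therefore reduces to the polynomial identity
\[
\sum_{d \mid n,\, d < n}\l f_d\r \;=\; \l \P_n(X)\r \quad\text{in}\quad \Z[X]/\l X^n - 1\r.
\]
From $X^n - 1 = \prod_{e \mid n}\P_e(X)$ one reads off $f_d = \prod_{e \mid n,\, e \nmid d}\P_e(X)$, so $\P_n \mid f_d$ whenever $d < n$, giving the inclusion $\subseteq$ at once.

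For the reverse inclusion I induct on the number $r$ of distinct prime divisors of $n$. The base case $r = 1$, $n = p^a$, is immediate: the choice $d = p^{a-1}$ yields $f_d = \P_{p^a} = \P_n$ outright. For $r \ge 2$ I write $n = p^a m$ with $p$ prime, $p \nmid m$, and $m > 1$, so that $G \cong A \times B$ with $A = \Z/p^a\Z$ and $B = \Z/m\Z$. Coprimality of $|A|$ and $|B|$ forces every subgroup of $G$ to split as $A' \times B'$, gives $R_K(G) \cong R_K(A) \otimes_\Z R_K(B)$ (the irreducibles of $G$ being the external tensor products of irreducibles of $A$ and $B$), and makes induction multiplicative in the sense that $\Ind_{A' \times B'}^{A \times B}(V \otimes W) \cong \Ind_{A'}^A V \otimes \Ind_{B'}^B W$. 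Splitting proper subgroups into the cases $A' < A$ (with $B'$ arbitrary) and $A' = A$, $B' < B$, the total sum of induction ideals equals $I_A \otimes R_K(B) + R_K(A) \otimes I_B$, where $I_A, I_B$ denote the analogous sums in $R_K(A), R_K(B)$. The base case gives $I_A = \l \P_{p^a}\r$ and the inductive hypothesis gives $I_B = \l \P_m\r$, so by the standard identification $(R \otimes S)/(I \otimes S + R \otimes J) \cong (R/I) \otimes_\Z (S/J)$ the quotient under study equals $\Z[X]/\l\P_{p^a}\r \otimes_\Z \Z[Y]/\l\P_m\r$.

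The main obstacle is the final identification of this tensor product with $\Z[Z]/\l\P_n\r$, equivalently the classical isomorphism $\Z[\zeta_{p^a}] \otimes_\Z \Z[\zeta_m] \cong \Z[\zeta_n]$ when $\gcd(p^a, m) = 1$. I would establish this via the natural multiplication map $\zeta_{p^a} \otimes \zeta_m \mapsto \zeta_{p^a}\zeta_m$: the product is a primitive $n$-th root of unity, and Bezout ($1 = rp^a + sm$) exhibits $\zeta_n = \zeta_{p^a}^s\zeta_m^r$, so the map is surjective; source and target are finitely generated $\Z$-modules of equal rank $\phi(p^a)\phi(m) = \phi(n)$, and the source is torsion-free as a tensor product of torsion-free $\Z$-modules, so surjectivity forces bijectivity. (Alternatively, this follows from linear disjointness of $\mathbb{Q}(\zeta_{p^a})$ and $\mathbb{Q}(\zeta_m)$ over $\mathbb{Q}$ together with coprimality of the discriminants of the two cyclotomic fields.) This completes the induction and hence the theorem.
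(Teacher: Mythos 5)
Your argument is correct in its skeleton and its first half coincides with the paper's: both identify $R_K(G)\cong\Z[X]/\langle X^n-1\rangle$, show each $\Ind_H^G(R_K(H))$ is the principal ideal generated by $(X^n-1)/(X^d-1)$ (the paper's $P_{n,n/d}$), and get the easy inclusion from $\Phi_n\mid f_d$. The divergence is in the hard inclusion. The paper stays inside $\Z[X]$ and, by induction on the number of prime divisors, explicitly exhibits $\Phi_n$ as a $\Z[X]$-combination of the $P_{n,d}$, using only elementary cyclotomic identities (M\"obius inversion, $\Phi_{rp^m}(X)=\Phi_{rp}(X^{p^{m-1}})$, $\Phi_{rp}=\Phi_r(X^p)/\Phi_r(X)$, and $\langle X^a-1,X^b-1\rangle=\langle X^{\gcd(a,b)}-1\rangle$); in particular it proves the stronger polynomial statement (Theorem \ref{reform}) with explicit witnesses and never needs irreducibility of $\Phi_n$. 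You argue structurally instead: $G\cong A\times B$ with coprime orders, $R_K(G)\cong R_K(A)\otimes_\Z R_K(B)$, induction is multiplicative, the total induction ideal is $I_A\otimes R_K(B)+R_K(A)\otimes I_B$, and the quotient is $(R_K(A)/I_A)\otimes(R_K(B)/I_B)\cong\Z[\zeta_{p^a}]\otimes_\Z\Z[\zeta_m]\cong\Z[\zeta_n]$. These steps are sound (subgroups of $A\times B$ split, $\Ind$ is multiplicative, the right-exactness identity holds, and your surjectivity-plus-rank argument is valid). What your route buys is brevity and conceptual transparency; what it costs is the classical input $[\mathbb{Q}(\zeta_k):\mathbb{Q}]=\phi(k)$ (equivalently, irreducibility of $\Phi_k$), needed both for $\Z[X]/\langle\Phi_k\rangle\cong\Z[\zeta_k]$ and for the rank count, which the paper's elementary computation avoids; and it yields only the abstract isomorphism, not the explicit representation of $\Phi_n$ in terms of the $P_{n,d}$ that the paper singles out as being of independent interest. (You could strip out the number theory by mapping $\Z[X]/\langle\Phi_{p^a}\rangle\otimes\Z[Y]/\langle\Phi_m\rangle\to\Z[Z]/\langle\Phi_n\rangle$ via $X\mapsto Z^{m}$, $Y\mapsto Z^{p^a}$, which is well defined since $\Phi_n(Z)$ divides $\Phi_{p^a}(Z^m)$ and $\Phi_m(Z^{p^a})$, and then run the same surjectivity-plus-rank argument with ranks read off from degrees.)

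One genuine bookkeeping gap: your inductive hypothesis is the ideal identity $I_B=\langle\Phi_m\rangle$ inside $R_K(B)$, but your inductive step, as written, only delivers an abstract ring isomorphism $R_K(G)/I_G\cong\Z[X]/\langle\Phi_n\rangle$, which is weaker than what the next stage of the induction consumes; so the induction does not literally close. The fix is one line: you already proved $I_G\subseteq\langle\Phi_n\rangle$, so there is a surjection $R_K(G)/I_G\twoheadrightarrow R_K(G)/\langle\Phi_n\rangle\cong\Z[X]/\langle X^n-1,\Phi_n(X)\rangle=\Z[X]/\langle\Phi_n(X)\rangle$; both sides are free $\Z$-modules of rank $\phi(n)$ (the source by your tensor computation, the target because $\deg\Phi_n=\phi(n)$), and a surjection between free $\Z$-modules of equal finite rank is injective, whence $I_G=\langle\Phi_n\rangle$. (Alternatively: a surjective endomorphism of the Noetherian ring $\Z[\zeta_n]$ is injective.) With that line added, your proof is complete and in fact also recovers the paper's Theorem \ref{reform}.
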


\

\noindent\textbf{Organisation.} In Section \ref{sec2} we compute the representation ring of a finite cyclic group $G$ in terms of a primitive character $\chi$ of $G$. More specifically, we show that $R_K(G) \cong \Z[\chi]$. We then use this isomorphism in Section \ref{sec3} to compute the sum of ideals $\Ind_H^G(R_K(H))$ as $H$ ranges over all proper subgroups of $G$. This allows us to reformulate Theorem \ref{mainthm} in the form of Theorem \ref{reform}, which is precisely what we prove in Section \ref{sec5} after establishing a few properties of the cyclotomic polynomials in Section \ref{sec4}.

\

In passing, we remark that Theorem \ref{reform} is interesting in its own right. For any positive integer $n$ and a divisor $d$ of $n$, it tells us that the ideal generated by polynomials of the type $P_{n,d}$ (see Section \ref{sec3} for the definition of $P_{n,d}$) in $\Z[X]$ is in fact a principal ideal, generated by $\Phi_n(X)$. To look at it in another way, Theorem \ref{reform} says that the ideal generated by $\Phi_n(X)$ (the coefficients of which are somewhat mysterious) in $\Z[X]$ can be generated by the polynomials $P_{n,d}$, which are just sums of powers of $X$. Readers who wish to see the proof of Theorem \ref{reform} can skip directly to Section \ref{sec4}.

\

\noindent\textbf{Acknowledgements.} I am very grateful to Professor Kapil Hari Paranjape for introducing me to this problem, and for suggesting improvements and corrections to an earlier version of the note. In particular, the reformulation of Theorem \ref{mainthm} as Theorem \ref{reform} is due to him. I also thank Suneel Kumar for directing me to a couple of results in \cite{Ser} (namely, \cite[Cor.~2, Thm.~4, Ch.~2]{Ser} and \cite[Thm.~12, Ch.~3]{Ser}) which were used in proving Proposition \ref{Ind1calc}.

\section{Representation Ring of Finite Cyclic Groups}\label{sec2}

In this section we determine the representation ring of finite cyclic groups. Since any finite dimensional representation can be written as a sum of irreducible representations, it suffices to classify the irreducible representations.



\begin{lem}\label{lem2}
Let $K$ be an algebraically closed field and $G$ an Abelian group. Let $\rho: G \to \GL(V)$ be a finite dimensional irreducible $K$-representation of $G$. Then for every $g \in G$, $\rho(g)$ acts on $V$ as $\lambda_g\cdot\mathbbm{1}_V$ for some $\lambda_g \in K^\times$. 
\end{lem}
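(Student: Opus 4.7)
The plan is to prove this as a standard consequence of Schur's lemma, crucially exploiting both the commutativity of $G$ and the algebraic closure of $K$.

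First I would fix an arbitrary $g \in G$ and observe that the $K$-linear map $\rho(g): V \to V$ is in fact a $K[G]$-module endomorphism of $V$. Indeed, for any $h \in G$ we have $\rho(g) \circ \rho(h) = \rho(gh) = \rho(hg) = \rho(h) \circ \rho(g)$, where the middle equality uses that $G$ is abelian. Hence $\rho(g) \in \End_{K[G]}(V)$.

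Next I would invoke Schur's lemma: since $V$ is irreducible, any nonzero element of $\End_{K[G]}(V)$ is an isomorphism, so $\End_{K[G]}(V)$ is a division algebra over $K$. To pin down this division algebra I would use algebraic closure. Since $V$ is finite dimensional over $K$, the endomorphism $\rho(g)$ has a characteristic polynomial in $K[T]$, and because $K$ is algebraically closed this polynomial has a root $\lambda_g \in K$. Then $\rho(g) - \lambda_g \cdot \mathbbm{1}_V$ is an element of $\End_{K[G]}(V)$ with nontrivial kernel (an eigenvector), so by Schur's lemma it cannot be an isomorphism, and must therefore be zero. This gives $\rho(g) = \lambda_g \cdot \mathbbm{1}_V$.

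Finally, since $\rho(g) \in \GL(V)$ is invertible (its inverse being $\rho(g^{-1})$), the scalar $\lambda_g$ must be nonzero, i.e., $\lambda_g \in K^\times$, completing the proof.

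There is no real obstacle here; the only subtlety worth flagging is that both hypotheses are genuinely needed. Without commutativity of $G$ the map $\rho(g)$ would not a priori be a $K[G]$-endomorphism, and without algebraic closure of $K$ the division algebra $\End_{K[G]}(V)$ could strictly contain $K$, so some $\rho(g)$ might fail to be scalar.
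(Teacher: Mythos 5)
Your proof is correct and is essentially the paper's argument: both rest on the fact that $\rho(g)-\lambda_g\cdot\mathbbm{1}_V$ has nonzero kernel (algebraic closure) which is $G$-stable (commutativity), forcing it to vanish by irreducibility. Phrasing this through Schur's lemma rather than directly observing that $\ker(\rho(g)-\lambda_g\cdot\mathbbm{1}_V)$ is a $K[G]$-submodule equal to $V$ is only a cosmetic repackaging of the same idea.
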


\begin{proof}
Fix $g \in G$. Since $K$ is assumed to be algebraically closed, the characteristic polynomial of $\rho(g)$ has a root, say, $\lambda_g \in K^\times$, so that $\{0\} \subsetneq \ker(\rho(g)-\lambda_g\cdot\mathbbm{1}_V)$. Moreover, as $G$ is assumed to be Abelian, $\ker(\rho(g)-\lambda_g\cdot\mathbbm{1}_V)$ is in fact a non-zero $K[G]$-submodule of $V$. However, $\rho$ is irreducible which implies that $V$ is a simple $K[G]$-module. Therefore $\ker(\rho(g)-\lambda_g\cdot\mathbbm{1}_V) = V$, or $\rho(g) = \lambda_g\cdot\mathbbm{1}_V$.
\end{proof}

Lemma \ref{lem2} says that if $(\rho, V)$ is a finite dimensional irreducible representation of $G$, then $\rho(g)$ acts as a scalar on $V$ for every $g \in G$, so that any subspace of $V$ will be stable under the action of $G$. Since $\rho$ is assumed to be irreducible, we conclude that $\dim_KV = 1$. Thus we have proved the following lemma.

\begin{lem}\label{lem3}
	Let $K$ be an algebraically closed field and $G$ an Abelian group. Then any irreducible $K$-representation of $G$ is one-dimensional.
\end{lem}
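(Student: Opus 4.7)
The plan is to deduce Lemma \ref{lem3} as an immediate consequence of Lemma \ref{lem2}. Let $(\rho, V)$ be a finite dimensional irreducible $K$-representation of $G$ (the context of the note, and in particular the definition of $R_K(G)$, makes clear that we are dealing with finite dimensional representations, so Lemma \ref{lem2} applies). By Lemma \ref{lem2}, for every $g \in G$ there exists $\lambda_g \in K^\times$ such that $\rho(g) = \lambda_g \cdot \mathbbm{1}_V$.

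First I would observe that since each $\rho(g)$ acts on $V$ by a scalar, every $K$-linear subspace $W \subseteq V$ is automatically stable under the $G$-action: for $w \in W$ and $g \in G$, one has $\rho(g)(w) = \lambda_g w \in W$. In particular, every such $W$ is a $K[G]$-submodule of $V$. Next I would invoke irreducibility. Suppose towards a contradiction that $\dim_K V \geq 2$, and pick any non-zero vector $v \in V$. Then $W := K\cdot v$ is a one-dimensional subspace of $V$, which is proper (by the dimension assumption) and non-zero. By the previous observation, $W$ is a $K[G]$-submodule, contradicting the simplicity of $V$ as a $K[G]$-module. Hence $\dim_K V \leq 1$, and since $V$ is non-zero (being irreducible) we conclude $\dim_K V = 1$.

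There is effectively no obstacle to overcome here, as the substantive content has already been established in Lemma \ref{lem2}; Lemma \ref{lem3} is essentially a packaging of that statement in its most useful form, and the proof amounts to the two-line reading carried out in the paragraph preceding it. The one minor point worth flagging is that Lemma \ref{lem3} as stated does not explicitly impose finite dimensionality, whereas Lemma \ref{lem2} does; this is benign in context since the representation ring $R_K(G)$ is defined using only finite dimensional irreducibles, but a cautious writer might either add the hypothesis to Lemma \ref{lem3} or note the restriction in the proof.
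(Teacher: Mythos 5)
Your proof is correct and follows essentially the same route as the paper, which deduces Lemma \ref{lem3} from Lemma \ref{lem2} by observing that scalar action makes every subspace $G$-stable, so irreducibility forces $\dim_K V = 1$. The remark about the implicit finite dimensionality hypothesis is a fair and harmless observation.
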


The results in this section so far hold for any irreducible $K$-representation of an Abelian group $G$, as long as $K$ is algebraically closed. We now specialize to finite cyclic groups. Let $G$ be a finite cyclic group of order $n$ with a generator $\sigma$. Then by Lemma \ref{lem3}, any irreducible representation of $G$ is a character $\chi: G \to K^\times$. Fix a primitive $n^\mathrm{th}$ root of unity $\zeta_n$ in $K$. Then the representation $\chi_G$ defined on the generator $\sigma$ of $G$ by $\chi_G(\sigma) = \zeta_n$ is an irreducible representation, since it is one-dimensional. We shall refer to $\chi_G$ as `the' primitive character of $G$. The following proposition shows that any other irreducible representation of $G$ is just a power of $\chi_G$.

\begin{prop}\label{prop}
	Any irreducible representation of $G$ is of the form $\chi_G^k$ where $0 \leq k < n$. (Here $\chi_G^k$ denotes the $k$-fold tensor product of $\chi_G$, $\chi_G^{\otimes k}$.)
\end{prop}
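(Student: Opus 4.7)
The plan is to reduce the question to understanding the possible values of $\rho(\sigma)$, since $\sigma$ generates $G$ and a one-dimensional representation is determined by where it sends the generator.

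First I would invoke Lemma \ref{lem3}: since $K$ is algebraically closed and $G$ is Abelian (being cyclic), any irreducible $K$-representation $\rho$ of $G$ is one-dimensional, and hence can be identified with a group homomorphism $\rho : G \to K^\times$. In particular, $\rho$ is completely determined by the single scalar $\rho(\sigma) \in K^\times$.

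Next, since $\sigma$ has order $n$ in $G$, the element $\rho(\sigma)$ must satisfy $\rho(\sigma)^n = \rho(\sigma^n) = \rho(e) = 1$, so $\rho(\sigma)$ is an $n^{\mathrm{th}}$ root of unity in $K$. Because $K$ has characteristic $0$, the polynomial $X^n - 1$ is separable over $K$, and because $K$ is algebraically closed it splits completely; hence the group $\mu_n(K)$ of $n^{\mathrm{th}}$ roots of unity in $K$ is cyclic of order exactly $n$, generated by the chosen primitive root $\zeta_n$. Therefore $\rho(\sigma) = \zeta_n^k$ for a unique $k$ with $0 \le k < n$.

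Finally I would identify $\rho$ with $\chi_G^k$. Since tensor product of one-dimensional representations corresponds to multiplication of scalars, an easy induction on $k$ gives $\chi_G^{\otimes k}(\sigma) = \chi_G(\sigma)^k = \zeta_n^k$; hence $\chi_G^k$ and $\rho$ agree on the generator $\sigma$, and so $\rho \cong \chi_G^k$ as representations. There is no real obstacle here: the only subtle input is the fact that $\mu_n(K)$ is cyclic of order $n$, which relies crucially on the characteristic $0$ and algebraic closure hypotheses and should be stated explicitly.
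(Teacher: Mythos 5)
Your proposal is correct and follows essentially the same route as the paper: reduce to characters via Lemma \ref{lem3}, note $\rho(\sigma)^n = 1$ so $\rho(\sigma) = \zeta_n^k$, and conclude $\rho = \chi_G^k$. The only difference is that you spell out explicitly that $\mu_n(K)$ is cyclic of order $n$ generated by $\zeta_n$ (using the hypotheses on $K$), a point the paper leaves implicit when writing $\rho(\sigma) = \zeta_n^k$.
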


\begin{proof}
	Indeed, since the order of $G$ is $n$, we see for any character $\rho$ that $\rho(\sigma)$ must be a $n^\mathrm{th}$ root of unity. Writing $\rho(\sigma) = \zeta_n^k$ for some $0 \leq k < n$, it follows that $\rho = \chi_G^k$.
\end{proof}

The above proposition provides us with a clean expression for the representation ring of $G \cong \Z/n\Z$:
\begin{equation}\label{cyclring}
	R_K(G) \cong \Z[\chi_G] \cong \frac{\Z[X]}{\langle X^n-1\rangle}
\end{equation}
where the second isomorphism is defined by mapping $\chi_G$ to $X$.

\section{Reformulation of Theorem \ref{mainthm}} \label{sec3}

As before, let $G$ be a cyclic group of order $n$ generated by $\sigma$ and let $\chi_G$ denote the primitive character of $G$, i.e., $\chi_G(\sigma) = \zeta_n$. For a positive divisor $d$ of $n$, let $H = \l \sigma^d \r$ be the unique cyclic subgroup of $G$ of order $n/d$. Then the restriction $\chi_G|_H$ is the primitive character $\chi_H$ of $H$. Indeed, $\chi_G(\sigma^d) = \zeta_n^d = \zeta_{n/d}$. It then follows that every irreducible representation of $H$ (which is a character by Lemma \ref{lem3} and therefore a power of $\chi_H$ by Proposition \ref{prop}) is the restriction of a representation of $G$. Since any finite dimensional representation is a sum of irreducible representations, we conclude that any finite dimensional representation of $H$ arises as the restriction of some representation of $G$. 

\

Next we would like to determine the ideal $\Ind_H^G(R_K(H))$. To that end, let $(V,\rho)$ be a representation of $H$. By the discussion in the previous paragraph, $(V,\rho)$ is a restriction of a representation $(W, \psi)$ of $G$: $\rho = \psi|_H$. Let $\mathbbm{1}_H: H \to K^\times$ denote the trivial representation of $H$. Then
\begin{eqnarray}
	\Ind_H^G(V) & = & \Ind_H^G(\Res^G_H(W)) \nonumber \\
	& \cong & W \otimes_{K[H]} K[G] \nonumber \\
	& \cong & (W \otimes K) \otimes_{K[H]} K[G] \nonumber \\
	& \cong & W \otimes (K\otimes_{K[H]} K[G]) \nonumber \\
	& \cong & W \otimes \Ind_H^G(\mathbbm{1}_H) \nonumber
\end{eqnarray}
where in the above $K[G]$-module isomorphisms we view $K$ as a $K[H]$-module where $H$ acts trivially on $K$. (That is, $K$ is the trivial representation of $H$.) This shows that the ideal $\Ind_H^G(R_K(H))$ of $R_K(G)$ is the principal ideal generated by $\Ind_H^G(\mathbbm{1}_H)$. Therefore in order to determine $\Ind_H^G(R_K(H))$, it suffices to determine $\Ind_H^G(\mathbbm{1}_H)$. This is precisely the content of the following proposition.

\begin{prop} \label{Ind1calc}
	With the above notations,
	\begin{equation}
		\Ind_H^G(\mathbbm{1}_H) \cong \bigoplus_{k = 0}^{d-1}\chi_G^{kn/d}.
	\end{equation}
\end{prop}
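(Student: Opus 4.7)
The plan is to compute the induced representation $\Ind_H^G(\mathbbm{1}_H) = K \otimes_{K[H]} K[G]$ explicitly by picking a convenient $K[H]$-basis of $K[G]$, determining the matrix of the generator $\sigma$ in the resulting $K$-basis, and then diagonalising it to read off the decomposition into characters.

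First, since $[G:H] = d$ and $G = \bigsqcup_{i=0}^{d-1} H\sigma^i$, the group algebra $K[G]$ is free as a left $K[H]$-module with basis $\{1, \sigma, \ldots, \sigma^{d-1}\}$. Tensoring with $K$ over $K[H]$, it follows that the $d$ vectors $v_i := 1 \otimes \sigma^i$, $0 \le i \le d-1$, form a $K$-basis of $\Ind_H^G(\mathbbm{1}_H)$; in particular the induced representation has dimension $d$ over $K$.

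Next I would compute the action of $\sigma$ on this basis. For $0 \le i \le d-2$ we have $\sigma \cdot v_i = 1 \otimes \sigma^{i+1} = v_{i+1}$, while for $i = d-1$, using that $\sigma^d \in H$ acts trivially on the left factor $K$ in $\mathbbm{1}_H$, one can slide $\sigma^d$ across the tensor: $\sigma \cdot v_{d-1} = 1 \otimes \sigma^d = \sigma^d \cdot 1 \otimes 1 = 1 \otimes 1 = v_0$. Thus $\sigma$ acts as the companion matrix of $X^d - 1$, whose characteristic polynomial is $X^d - 1$. Since $K$ contains $\zeta_n$ it also contains $\zeta_d := \zeta_n^{n/d}$, so $X^d - 1$ splits into $d$ distinct linear factors over $K$ with roots $\zeta_n^{kn/d}$, $0 \le k \le d-1$. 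Hence $\sigma$ is diagonalisable on $\Ind_H^G(\mathbbm{1}_H)$ with these $d$ distinct eigenvalues.

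Finally, each one-dimensional $\sigma$-eigenspace is automatically $G$-stable because $\sigma$ generates $G$, and the action of $\sigma$ on the eigenspace for $\zeta_n^{kn/d} = \chi_G^{kn/d}(\sigma)$ identifies it with the character $\chi_G^{kn/d}$ (using Lemma \ref{lem3} to know that this one-dimensional representation is determined by $\sigma$'s scalar). Summing these lines gives the claimed decomposition $\Ind_H^G(\mathbbm{1}_H) \cong \bigoplus_{k=0}^{d-1} \chi_G^{kn/d}$. The only delicate point in the plan is the tensor-product bookkeeping for the case $i = d-1$, where one must carefully move $\sigma^d$ across the tensor using the triviality of the $H$-action on $K$; once that is set up correctly the rest reduces to elementary linear algebra over the algebraically closed field $K$.
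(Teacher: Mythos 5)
Your proof is correct, but it takes a genuinely different route from the paper. You compute $\Ind_H^G(\mathbbm{1}_H)=K\otimes_{K[H]}K[G]$ explicitly: the coset representatives $1,\sigma,\dots,\sigma^{d-1}$ give a $K[H]$-basis of $K[G]$, hence a $K$-basis $v_i = 1\otimes\sigma^i$ of the induced module, on which $\sigma$ acts as the cyclic permutation (companion matrix of $X^d-1$); since $\zeta_d=\zeta_n^{n/d}$ has exact order $d$, this matrix has the $d$ distinct eigenvalues $\zeta_n^{kn/d}$, and diagonalising gives the $d$ character lines $\chi_G^{kn/d}$. The tensor bookkeeping at $i=d-1$ (sliding $\sigma^d\in H$ across the tensor and using that $\mathbbm{1}_H(\sigma^d)=1$) is exactly right, and the distinctness of the eigenvalues — which is what makes $\sigma$ diagonalisable without any appeal to semisimplicity — is guaranteed by the existence of the primitive $n^{\mathrm{th}}$ root of unity in $K$. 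The paper instead argues through character theory: it computes the character of $\bigoplus_k\chi_G^{kn/d}$ by a geometric-series evaluation, computes the character of the induced representation from the induced-character formula (\cite[Thm.~12, Ch.~3]{Ser}), and concludes from the fact that in characteristic $0$ equal characters imply isomorphic representations (\cite[Cor.~2, Thm.~4, Ch.~2]{Ser}). Your argument buys self-containedness — no citations, just linear algebra, and in effect it identifies $\Ind_H^G(\mathbbm{1}_H)$ with the regular representation of $G/H$ inflated to $G$ — while the paper's argument is shorter once the standard character-theoretic machinery is granted. Either proof is acceptable here.
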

\begin{proof}
	By \cite[Cor.~2, Thm.~4, Ch.~2]{Ser} it suffices to show that the characters of $\rho:= \Ind_H^G(\mathbbm{1}_H)$ and $\psi:= \bigoplus_{k = 0}^{d-1}\chi^{kn/d}$ agree. First, the character $\chi_\psi$ of $\psi$ is just $\chi_\psi = \sum_{k=0}^{d-1}\chi_G^{kn/d}$. Let $u \in G$. If $u \in H = \l \sigma^d \r$, then $u = \sigma^{d\ell}$ for some positive integer $\ell$. So,
	\begin{equation*}
		\chi_\psi(u) = \chi_\psi(\sigma^{d\ell}) = \sum_{k=0}^{d-1}\chi_G^{kn/d}(\sigma^{d\ell}) = \sum_{k=0}^{d-1}\zeta_n^{knl} = d.
	\end{equation*}
	On the other hand suppose $u \in G\setminus H$. Then we may write $u = \sigma^m$ where $m = qd + r$ for integers $q$ and $r$ with $0 < r < d$. Then $\chi^{n/d}(u) = \zeta_n^{qn + rn/d} = \zeta_d^r$. Hence
	\begin{eqnarray}
		\chi_\psi(u) & = & \sum_{k=0}^{d-1} \chi_G^{kn/d}(u) \nonumber\\
		& = & \sum_{k=0}^{d-1} \zeta_d^{kr} = \frac{\zeta_d^{dr} - 1}{\zeta_d^{r} - 1} = 0. \nonumber
	\end{eqnarray}
	Thus,
	\begin{equation*}
		\chi_\psi(u) = \begin{cases*}
			0 & if $u \notin H$, \\
			d & if $u \in H$.
		\end{cases*}
	\end{equation*}
	For the character of $\rho$ we have
	\begin{eqnarray}
		\chi_\rho(u) & = & \sum_{t \in G/H, \hspace{0.1 cm} t^{-1}ut \in H} \mathbbm{1}_H(t^{-1}ut) \nonumber\\
		& = & \sum_{t \in G/H, \hspace{0.1 cm} u \in H} \mathbbm{1}_H(u) \nonumber \\
		& = & \begin{cases*}
			0 & if $u \notin H$,\\
			d & if $u \in H$
		\end{cases*} \nonumber
	\end{eqnarray}
	where the first equality follows from \cite[Thm.~12, Ch.~3]{Ser}. So $\chi_\psi = \chi_\rho$.
\end{proof}

\

Let $\varphi: R_K(G) = \Z[\chi_G] \to \Z[X]/\l X^n - 1\r$ denote the second isomorphism from Equation \ref{cyclring}. Then by the above proposition,
\begin{equation*}
	\varphi\left(\Ind_H^G(\mathbbm{1}_H)\right) = \varphi\left(\bigoplus_{k = 0}^{d-1}\chi_G^{kn/d}\right) = \sum_{k=0}^{d-1}X^{kn/d}.
\end{equation*}
Furthermore, if $P_{n,d} \in \Z[X]$ denotes the polynomial $\sum_{k=0}^{d-1}X^{kn/d}$, then
\begin{equation*}
	\varphi\left(\sum_{H < G} \Ind_H^G(R_K(H))\right) = \left\langle P_{n,d}: d > 1, d\mid n\right\rangle
\end{equation*}
as ideals in $\Z[X]$. Thus we may restate Theorem \ref{mainthm} as follows.

\begin{thm}\label{reform}
	Let $n$ be a positive integer. Then
	\begin{equation}
		\left\langle P_{n,d}: d > 1, d\mid n\right\rangle = \left\langle \Phi_n(X)\right\rangle
	\end{equation}
	as ideals in $\Z[X]$.
\end{thm}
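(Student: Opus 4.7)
The inclusion $\l P_{n,d} : d > 1,\, d \mid n \r \subseteq \l \Phi_n(X)\r$ is immediate from the factorization $P_{n,d}(X) = (X^n-1)/(X^{n/d}-1) = \prod_{e \mid n,\, e \nmid n/d} \Phi_e(X)$: since $d > 1$ forces $n \nmid n/d$, the factor $\Phi_n$ appears in the product. For the reverse inclusion I plan to induct on the number of distinct prime divisors of $n$ (with $n \geq 2$ implicit, the case $n = 1$ being degenerate).

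The base case $n = p^a$ is a one-line computation: $P_{n,p}(X) = \sum_{k=0}^{p-1} X^{k p^{a-1}} = \Phi_{p^a}(X) = \Phi_n(X)$. For the inductive step, write $n = p^a m$ with $p \nmid m$ and $m > 1$, and set $I := \l P_{n,d} : d > 1,\, d \mid n \r$. The plan is to produce two elements of $I$, each visibly divisible by $\Phi_n$, and then strip off $\Phi_n$ by a B\'ezout argument. By the inductive hypothesis applied to $m$, $\Phi_m(Y) \in \l P_{m,e}(Y) : e > 1,\, e \mid m\r$ in $\Z[Y]$; substituting $Y = X^{p^a}$ and using the identity $P_{m,e}(X^{p^a}) = P_{n,e}(X)$ gives $\Phi_m(X^{p^a}) \in I$, which via the cyclotomic identity $\Phi_m(X^{p^a}) = \Phi_n(X) \cdot \Phi_m(X^{p^{a-1}})$ rewrites as $\Phi_n(X) \cdot \Phi_m(X^{p^{a-1}}) \in I$. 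Second, $P_{n,p}(X) \in I$ itself factors as $\prod_{k \mid m} \Phi_{p^a k}(X) = \Phi_n(X) \cdot A(X)$, where $A(X) := \prod_{k \mid m,\, k < m} \Phi_{p^a k}(X)$.

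To finish the induction it suffices to produce a B\'ezout relation $1 = u\,\Phi_m(X^{p^{a-1}}) + v\,A(X)$ with $u, v \in \Z[X]$, because multiplying by $\Phi_n(X)$ then exhibits $\Phi_n(X)$ as a $\Z[X]$-combination of the two elements of $I$ just constructed.

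The main obstacle is precisely this $\Z[X]$-coprimality, which is strictly stronger than coprimality over $\Q$ and must be checked modulo every rational prime $q$. The plan is to split into three cases. For $q \nmid n$, the roots of $\Phi_m(X^{p^{a-1}})$ in $\overline{\mathbb{F}_q}^\times$ have orders of the form $p^c m$ with $0 \leq c \leq a-1$, while the roots of $A(X)$ have orders of the form $p^a k$ with $k \mid m$ and $k < m$, and the two sets are disjoint. For $q = p$, Frobenius gives $\Phi_m(X^{p^{a-1}}) \equiv \Phi_m(X)^{p^{a-1}} \pmod p$, whereas the standard congruence $\Phi_{p^a k}(X) \equiv \Phi_k(X)^{\phi(p^a)} \pmod p$ shows that $A(X) \pmod p$ is a product of reductions of $\Phi_k$ with $k \mid m,\, k < m$; since the $\Phi_k \pmod p$ for $k \mid m$ are pairwise coprime, $\Phi_m \pmod p$ shares no irreducible factor with $A(X) \pmod p$. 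For $q \mid m$ with $q \neq p$, writing $m = q^s m''$ with $q \nmid m''$, the analogous reduction $\Phi_{q^s N'}(X) \equiv \Phi_{N'}(X)^{\phi(q^s)} \pmod q$ reduces the question to comparing orders $\{p^c m'' : 0 \leq c \leq a-1\}$ with $\{p^a k' : k' \mid m''\}$ in $\overline{\mathbb{F}_q}^\times$, which are again disjoint by matching $p$-adic valuations.
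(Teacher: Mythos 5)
Your proof is correct, but the decisive step is genuinely different from the paper's. Both arguments run by induction on the number of distinct prime divisors, substitute $X \mapsto X^{p^a}$ into the inductive identity for $m = np^{-a}$, and use $\Phi_m(X^{p^a}) = \Phi_n(X)\,\Phi_m(X^{p^{a-1}})$ (Proposition \ref{prep2}). At that point the paper \emph{divides}: an integrality statement (Proposition \ref{prep3}) lets it divide the substituted identity by $\Phi_m(X^{p^{a-1}})$ inside $\Z[X]$, and the binomial-gcd lemma $\langle X^a-1, X^b-1\rangle = \langle X^{\gcd(a,b)}-1\rangle$ (Lemma \ref{prep4}) recombines the outcome into an explicit relation $\sum_i \frac{X^n-1}{X^{n/p_i}-1}\,f_i(X) = \Phi_n(X)$, which in particular shows that the prime-indexed generators $P_{n,p}$ alone generate $\langle\Phi_n\rangle$, with explicit cofactors. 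You \emph{multiply} instead: you reduce the inductive step to the single claim that $\Phi_m(X^{p^{a-1}})$ and $A(X) = P_{n,p}/\Phi_n$ generate the unit ideal of $\Z[X]$, and verify it prime by prime; your three cases ($q\nmid n$, $q=p$, $q\mid m$) are all correct, the orders of the relevant roots in $\overline{\mathbb{F}_q}^\times$ being separated by their $p$-adic valuations. What your route buys is a cleaner skeleton---one coprimality statement carries the whole inductive step---and it bypasses Proposition \ref{prep3} and Lemma \ref{prep4} entirely. What it costs is two ingredients you would still have to prove or cite to keep the note self-contained: the congruence $\Phi_{q^sN}(X)\equiv\Phi_N(X)^{\phi(q^s)}\pmod q$ for $q\nmid N$ (easily derived from Lemma \ref{prep1}), and the fact that for monic $f,g\in\Z[X]$, coprimality of the reductions modulo every prime forces $\langle f,g\rangle=\Z[X]$ (for instance because $\Z[X]/\langle f,g\rangle$ is a finitely generated $\Z$-module $M$ with $M/qM=0$ for every prime $q$, or because the resultant of $f$ and $g$ must then be $\pm1$). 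Two smaller contrasts: your induction as set up uses the composite-index generators $P_{n,e}$ for $e\mid m$, whereas the paper's version gets by with the $P_{n,p_i}$ alone and produces the cofactors explicitly; and you are right to exclude $n=1$, where the ideal-theoretic statement is vacuously false even though Theorem \ref{mainthm} still holds.
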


\section{Preliminary Results} \label{sec4}

In this section we establish a few properties of the cyclotomic polynomials which will help us in proving Theorem \ref{reform}.

\begin{lem}\label{prep1}
Let $p$ be a prime and $r$ a positive integer such that $\gcd(r,p) = 1$.
\begin{itemize}
\item For any integer $m \geq 1$, 
\begin{equation*}
\P_{rp^m}(X) = \P_{rp}(X^{p^{m-1}}).
\end{equation*}
\item We have
\begin{equation*}
\P_{rp}(X) = \frac{\P_r(X^p)}{\P_r(X)}.
\end{equation*}
\end{itemize}
\end{lem}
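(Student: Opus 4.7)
Both identities follow from comparing the roots of the polynomials involved in an algebraic closure of $\mathbb{Q}$, where every cyclotomic polynomial splits into distinct linear factors.

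For the first identity, I would show that $\alpha$ is a primitive $rp^m$-th root of unity if and only if $\alpha^{p^{m-1}}$ is a primitive $rp$-th root of unity. The forward direction is immediate: if $\ord(\alpha) = rp^m$, then $\ord(\alpha^{p^{m-1}}) = rp^m/\gcd(rp^m, p^{m-1}) = rp$, using $\gcd(r,p) = 1$. For the converse, writing $\ord(\alpha) = r'p^s$ with $r' \mid r$ and $0 \leq s \leq m$ and setting $\ord(\alpha)/\gcd(\ord(\alpha), p^{m-1}) = rp$ forces $s = m$ and $r' = r$. It then remains to verify that $\Phi_{rp}(X^{p^{m-1}})$ is separable of degree $\varphi(rp^m)$: each of the $\varphi(rp)$ primitive $rp$-th roots of unity has exactly $p^{m-1}$ distinct $p^{m-1}$-th roots in characteristic zero, giving $p^{m-1}\varphi(rp) = \varphi(rp^m)$ distinct roots, matching both its own degree and that of $\Phi_{rp^m}(X)$. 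Since both sides are monic of the same degree with the same set of roots, they coincide.

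For the second identity, the cleanest route is M\"obius inversion. Starting from $X^n - 1 = \prod_{d \mid n} \Phi_d(X)$, M\"obius inversion gives $\Phi_n(X) = \prod_{d \mid n}(X^{n/d} - 1)^{\mu(d)}$. Because $\gcd(r, p) = 1$ and $p$ is prime, every divisor of $rp$ is either a divisor $d$ of $r$ or of the form $dp$ with $d \mid r$, and $\mu(dp) = \mu(d)\mu(p) = -\mu(d)$. Pairing these two families in the product expression for $\Phi_{rp}(X)$ immediately yields $\Phi_r(X^p)/\Phi_r(X)$.

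The main obstacle is essentially bookkeeping: in the first part one must carry out the order computation carefully, as the hypothesis $\gcd(r,p) = 1$ is exactly what rules out the bad case and forces $r' = r$; in the second part one must interpret the quotient in $\mathbb{Q}(X)$ before concluding it lies in $\mathbb{Z}[X]$ by virtue of equaling $\Phi_{rp}(X)$. As an alternative to M\"obius inversion, one can prove the second identity by induction on the number of divisors of $r$: from $X^{rp} - 1 = \prod_{d \mid r}\Phi_d(X^p) = \prod_{d \mid r}\Phi_d(X)\Phi_{dp}(X)$, the inductive hypothesis applied to the proper divisors $d < r$ isolates $\Phi_{rp}(X)$ as $\Phi_r(X^p)/\Phi_r(X)$.
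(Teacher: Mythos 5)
Your proposal is correct, but it is only half the same as the paper's argument, so a brief comparison is in order. For the second identity you do exactly what the paper does: apply M\"obius inversion to $X^n-1=\prod_{d\mid n}\Phi_d(X)$, split the divisors of $rp$ into $\{d: d\mid r\}$ and $\{dp: d\mid r\}$ (valid precisely because $\gcd(r,p)=1$), and use $\mu(dp)=-\mu(d)$ to pair the two families; the paper writes the product as $\prod_{d\mid n}(X^d-1)^{\mu(n/d)}$ rather than your $\prod_{d\mid n}(X^{n/d}-1)^{\mu(d)}$, which is the same thing. For the first identity, however, the paper stays inside the same M\"obius framework: in $\Phi_{rp^m}(X)=\prod_{d\mid rp^m}(X^d-1)^{\mu(rp^m/d)}$ all factors with $\ord_p(d)\leq m-2$ drop out because $\mu(rp^m/d)=0$, and the substitution $f=dp^{-(m-1)}$ turns the surviving product into $\Phi_{rp}(X^{p^{m-1}})$. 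You instead compare root sets over $\overline{\mathbb{Q}}$: your order computation is sound (the hypothesis $\gcd(r,p)=1$ is indeed what forces $r'=r$ and $s=m$ in the converse), and the degree count $p^{m-1}\varphi(rp)=\varphi(rp^m)$ together with separability of $\Phi_{rp}(X^{p^{m-1}})$ in characteristic zero does pin down the identity, which then descends to $\Z[X]$ since both sides are monic integer polynomials. The trade-off: the paper's route is purely formal and uniform for both bullets, needing no appeal to roots of unity, separability, or degree counting, whereas your route for the first bullet is more conceptual (it explains the identity as a bijection of primitive roots) at the cost of the extra bookkeeping you identify. Both are complete proofs; just make sure, if you write up the root-theoretic version, to state explicitly that equality of monic polynomials over $\overline{\mathbb{Q}}$ gives the claimed identity in $\Z[X]$, since that is the form in which the lemma is used later.
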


\begin{proof}
	Since 
	\begin{equation*}
		X^n - 1 = \prod_{d\mid n} \Phi_d(X),
	\end{equation*}
	we obtain by the M\"{o}bius inversion formula
	\begin{equation}\label{mobius}
		\Phi_n(X) = \prod_{d\mid n}(X^d - 1)^{\mu(n/d)}
	\end{equation}
	where $\mu: \Z \to \{-1,0,1\}$ is the M\"{o}bius function defined as
	\begin{equation*}
		\mu(n) = \begin{cases*}
			0 & if $n$ is not squarefree, \\
			1 & if $n$ is squarefree and has an even number of prime factors, \\
			-1 & if $n$ is squarefree and has an odd number of prime factors.
		\end{cases*}
	\end{equation*}
	We now prove the first part of the lemma. From Equation \ref{mobius} we have
	\begin{equation}\label{4.1}
		\Phi_{rp^m}(X) = \prod_{d\mid rp^m}(X^d - 1)^{\mu(rp^m/d)}.
	\end{equation}
	However if $\ord_p(d) \leq m-2$ then $rp^m/d$ would be divisible by $p^2$ and consequently $\mu(rp^m/d) = 0$. Thus the product in Equation \ref{4.1} ranges over all divisors $d$ of $rp^m$ such that $\ord_p(d) = m$ or $m-1$. Writing $f = dp^{-(m-1)}$,
	\begin{eqnarray}
	 	\Phi_{rp^m}(X) & = & \prod_{f\mid rp}(X^{fp^{m-1}} - 1)^{\mu(rp^m/fp^{m-1})} \nonumber \\
	 	& = & \prod_{f\mid rp}(X^{fp^{m-1}} - 1)^{\mu(rp/f)} \nonumber \\
	 	& = & \Phi_{rp}(X^{p^{m-1}}). \nonumber
	\end{eqnarray}

	For the second part, note that any divisor of $pr$ is either a divisor of $r$ or $p$ times a divisor of $r$. Thus
	\begin{eqnarray}
		\Phi_{rp}(X) & = & \prod_{d\mid rp} (X^d-1)^{\mu(rp/d)} \nonumber \\
		& = & \prod_{d\mid r} (X^d-1)^{\mu(rp/d)}\cdot \prod_{d\mid r} (X^{pd}-1)^{\mu(rp/dp)}. \nonumber
	\end{eqnarray}
	Furthermore, since $\gcd(p,r) = 1$, we see that $\mu(rp/d) = \mu(p)\mu(r/d) = -\mu(r/d)$. Hence
	\begin{eqnarray}
		\Phi_{rp}(X) & = & \prod_{d\mid r} (X^d-1)^{\mu(rp/d)}\cdot \prod_{d\mid r} (X^{dp}-1)^{\mu(rp/dp)} \nonumber \\
		& = & \prod_{d\mid r} (X^d-1)^{-\mu(r/d)}\cdot \prod_{d\mid r} (X^{dp}-1)^{\mu(r/d)} \nonumber \\
		& = & \frac{\Phi_r(X^p)}{\Phi_r(X)}. \nonumber
	\end{eqnarray}
\end{proof}

\begin{prop}\label{prep2}
Let $n$ be a positive multiple of $p$ and let $a = \ord_p(n) \geq 1$. Then
\begin{equation*}
\P_n(X) = \frac{\P_{np^{-a}}(X^{p^a})}{\P_{np^{-a}}(X^{p^{a-1}})}.
\end{equation*}
\end{prop}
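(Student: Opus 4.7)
The plan is to derive the identity directly from the two parts of Lemma \ref{prep1}, applied in sequence. Write $r := np^{-a}$ so that $n = rp^{a}$ with $a \geq 1$ and $\gcd(r,p) = 1$; this is exactly the setup needed to invoke that lemma.

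First I would apply the first part of Lemma \ref{prep1} to the integer $n = rp^{a}$ (using $m = a$), yielding
\begin{equation*}
    \Phi_{n}(X) \;=\; \Phi_{rp^{a}}(X) \;=\; \Phi_{rp}\bigl(X^{p^{a-1}}\bigr).
\end{equation*}
This collapses all the prime-power information in $n$ into a single substitution inside the ``base'' cyclotomic polynomial $\Phi_{rp}$.

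Next I would apply the second part of Lemma \ref{prep1}, which gives $\Phi_{rp}(Y) = \Phi_r(Y^p)/\Phi_r(Y)$, with the substitution $Y = X^{p^{a-1}}$. Since $Y^{p} = X^{p^{a}}$, this yields
\begin{equation*}
    \Phi_{rp}\bigl(X^{p^{a-1}}\bigr) \;=\; \frac{\Phi_{r}\bigl(X^{p^{a}}\bigr)}{\Phi_{r}\bigl(X^{p^{a-1}}\bigr)}.
\end{equation*}
Combining the two displays and recalling that $r = np^{-a}$ gives precisely the asserted formula.

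There is no real obstacle here: the proposition is essentially a formal composition of the two statements in Lemma \ref{prep1}, and the only thing to be careful about is tracking the exponents $p^{a-1}$ and $p^{a}$ correctly under the substitution. One small sanity check worth including in the write-up is that $a \geq 1$ is needed so that the exponent $p^{a-1}$ in the first step makes sense (i.e.\ is a nonnegative power of $p$), which matches the hypothesis $\ord_{p}(n) \geq 1$.
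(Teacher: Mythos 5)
Your proposal is correct and follows the same route as the paper: apply the first part of Lemma \ref{prep1} with $r = np^{-a}$ and $m = a$ to get $\Phi_n(X) = \Phi_{rp}(X^{p^{a-1}})$, then substitute $X^{p^{a-1}}$ into the second part of that lemma. The exponent bookkeeping and the remark that $a \geq 1$ is what makes $p^{a-1}$ a legitimate power are exactly the points the paper's proof relies on as well.
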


\begin{proof}
	From the first part of Lemma \ref{prep1},
	\begin{equation*}
	\Phi_n(X) = \Phi_{p^a(np^{-a})}(X) = \Phi_{p(np^{-a})}\left(X^{p^{a-1}}\right)
	\end{equation*}
	and from the second part of the same lemma,
	\begin{equation*}
	\Phi_{p(np^{-a})}\left(X^{p^{a-1}}\right) = \frac{\Phi_{np^{-a}}\left(X^{p^{a}}\right)}{\Phi_{np^{-a}}\left(X^{p^{a-1}}\right)}
	\end{equation*}
	and the result follows.
\end{proof}

\begin{prop}\label{prep3}
Let $n$ be a positive multiple of $p$ and $a = \ord_p(n) \geq 1$. Then for a prime divisor $q$ of $n$ other than $p$,
\begin{equation}
\frac{X^{n/p} - 1}{(X^{n/pq}-1)\cdot\P_{np^{-a}}(X^{p^{a-1}})} \in \Z[X].
\end{equation}
\end{prop}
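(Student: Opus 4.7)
The plan is to introduce the substitution $Y = X^{p^{a-1}}$ and rewrite all three factors in the displayed quotient in terms of $Y$. Writing $m = np^{-a}$, we have $\gcd(m,p)=1$ and, since $q$ is a prime divisor of $n$ different from $p$, $q \mid m$. Then $n/p = mp^{a-1}$ and $n/(pq) = (m/q)p^{a-1}$, so
\begin{equation*}
X^{n/p}-1 = Y^{m}-1, \qquad X^{n/(pq)}-1 = Y^{m/q}-1, \qquad \Phi_{np^{-a}}(X^{p^{a-1}}) = \Phi_m(Y).
\end{equation*}
Hence it suffices to show that
\begin{equation*}
\frac{Y^m - 1}{(Y^{m/q} - 1)\,\Phi_m(Y)} \in \Z[Y],
\end{equation*}
because then, after substituting back $Y = X^{p^{a-1}}$, the resulting polynomial lies in $\Z[X]$.

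For this I would invoke the factorisation $Z^N - 1 = \prod_{d\mid N} \Phi_d(Z)$ applied to both the numerator and the first factor of the denominator. Since every divisor of $m/q$ is also a divisor of $m$, cancellation gives
\begin{equation*}
\frac{Y^m - 1}{Y^{m/q} - 1} = \prod_{\substack{d\mid m \\ d\nmid m/q}} \Phi_d(Y).
\end{equation*}
The divisor $d = m$ occurs in this product (because $q\mid m$ forces $m\nmid m/q$), so one further cancellation yields
\begin{equation*}
\frac{Y^m - 1}{(Y^{m/q} - 1)\,\Phi_m(Y)} = \prod_{\substack{d\mid m,\ d\neq m \\ d\nmid m/q}} \Phi_d(Y),
\end{equation*}
which is a product of monic polynomials in $\Z[Y]$ and therefore lies in $\Z[Y]$.

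There is essentially no hard step; the only thing to be careful about is verifying that $\Phi_m(Y)$ actually appears as one of the factors in the quotient $(Y^m-1)/(Y^{m/q}-1)$, which is exactly the observation that $q \mid m$ (guaranteed by the hypothesis that $q$ is a prime divisor of $n$ distinct from $p$, combined with $a = \ord_p(n)$ so that the $p$-free part of $n$ is $m$). Once this is in place, the result is immediate.
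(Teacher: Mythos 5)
Your proof is correct, and it takes a mildly but genuinely different route from the paper's. Both arguments ultimately rest on the factorisation $Z^N-1=\prod_{d\mid N}\Phi_d(Z)$, but you perform the substitution $Y=X^{p^{a-1}}$ \emph{first} and work directly with the full $p$-free part $m=np^{-a}$ (not necessarily squarefree), so that the quotient becomes $\bigl(Y^m-1\bigr)/\bigl((Y^{m/q}-1)\Phi_m(Y)\bigr)$ and the conclusion follows from the single observation that $q\mid m$, whence $\Phi_m(Y)$ is one of the factors of $(Y^m-1)/(Y^{m/q}-1)=\prod_{d\mid m,\ d\nmid m/q}\Phi_d(Y)$. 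The paper instead proves the statement at the level of the radical $p_1p_2\cdots p_{k-1}$ of $n/p^{a}$, then substitutes $X^{p_1^{a_1-1}\cdots p_k^{a_k-1}}$ and must invoke the identity $\Phi_{rp^{m}}(X)=\Phi_{rp}(X^{p^{m-1}})$ (Lemma \ref{prep1}, first part) repeatedly to recognise the resulting cyclotomic factor as $\Phi_{np^{-a}}(X^{p^{a-1}})$. Your version avoids any appeal to Lemma \ref{prep1} and is arguably cleaner; what the paper's route buys is that the explicit divisibility is established for the squarefree kernel once and for all, with the general case obtained purely by substitution, which matches the way Lemma \ref{prep1} is used elsewhere in the argument. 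The only facts you use implicitly --- that each $\Phi_d$ is monic with integer coefficients and that $m\nmid m/q$ since $q>1$ --- are standard and stated correctly, so there is no gap.
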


\begin{proof}
	Let $n = p_1^{a_1}p_2^{a_2}\cdots p_k^{a_k}$ where $p_1, p_2, \dots, p_k$ are the distinct prime divisors of $n$ and $a_1, a_2, \dots, a_k$ are positive integers. Without loss of generality, we prove the lemma when $p = p_k$ and $q = p_{k-1}$. Writing $m = p_1p_2\cdots p_{k-1}$,
	\begin{equation}\label{eq4}
	\prod_{d\mid m} \Phi_d(X) = X^m-1,
	\end{equation}
	and since $p_1p_2\cdots p_{k-2}$ divides $m$, 
	\begin{eqnarray}
	\prod_{d\mid m} \Phi_d(X) & = & P(X)\cdot\left[\prod_{d\mid p_1p_2\cdots p_{k-2}} \Phi_d(X)\right] \cdot \Phi_m(X) \nonumber \\ & = & P(X)\cdot\left(X^{p_1p_2\cdots p_{k-2}}-1\right)\cdot\Phi_m(X) \nonumber
	\end{eqnarray}
	for some polynomial $P \in \Z[X]$. So from Equation \ref{eq4} we conclude that
	\begin{equation*}
	\frac{X^{p_1p_2\cdots p_{k-1}}-1}{\left(X^{p_1p_2\cdots p_{k-2}}-1\right)\cdot\Phi_{p_1p_2\cdots p_{k-1}}(X)} = P(X) \in \Z[X].
	\end{equation*}
	Substituting $X^{p_1^{a_1-1}p_2^{a_2-1}\cdots p_k^{a_k-1}}$ for $X$ in the above expression,
	\begin{equation}\label{eq5}
	\frac{X^{n/p_k}-1}{\left(X^{n/p_{k-1}p_k}-1\right)\cdot\Phi_{p_1p_2\cdots p_{k-1}}\left(X^{p_1^{a_1-1}p_2^{a_2-1}\cdots p_k^{a_k-1}}\right)} \in \Z[X].
	\end{equation}
	By a repeated application of the first part of Lemma \ref{prep1},
	\begin{equation*}
    \Phi_{np_k^{-a_k}}\left(X^{p_k^{a_k-1}}\right) = \Phi_{p_1^{a_1}p_2^{a_2}\cdots p_{k-1}^{a_{k-1}}}\left(X^{p_k^{a_k-1}}\right) = \Phi_{p_1p_2\cdots p_{k-1}}\left(X^{p_1^{a_1-1}p_2^{a_2-1}\cdots p_k^{a_k-1}}\right)
	\end{equation*}
	and so substituting this into Equation \ref{eq5} we find
	\begin{equation*}
	\frac{X^{n/p_k}-1}{\left(X^{n/p_{k-1}p_k}-1\right)\cdot\Phi_{np_k^{-a_k}}\left(X^{p_k^{a_k-1}}\right)} \in \Z[X]
	\end{equation*}
	which proves the lemma.
\end{proof}


\begin{lem}\label{prep4}
For positive integers $a$ and $b$ with $d = \gcd(a,b)$,
\begin{equation*}
\l X^d - 1\r = \l X^a-1, X^b-1\r
\end{equation*}
as ideals in $\Z[X]$.
\end{lem}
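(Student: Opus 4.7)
The plan is to prove the two ideal containments separately. The inclusion $\langle X^d-1\rangle \supseteq \langle X^a-1, X^b-1\rangle$ is the easier direction: since $d\mid a$, the standard factorisation $X^a-1 = (X^d-1)\bigl(X^{a-d} + X^{a-2d} + \cdots + 1\bigr)$ shows that $X^d-1$ divides $X^a-1$ in $\Z[X]$, and similarly $X^d-1$ divides $X^b-1$. Hence both generators on the right lie in $\langle X^d-1\rangle$.

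For the reverse inclusion $\langle X^d-1\rangle \subseteq \langle X^a-1, X^b-1\rangle$, the natural approach is to mimic the Euclidean algorithm on the exponents at the level of the polynomials. Without loss of generality assume $a \geq b$. The key identity is
\begin{equation*}
(X^a-1) - X^{a-b}(X^b-1) = X^{a-b}-1,
\end{equation*}
which shows that $X^{a-b}-1 \in \langle X^a-1, X^b-1\rangle$. More generally, writing $a = qb + r$ with $0 \leq r < b$, the same kind of subtraction yields $X^r - 1 \in \langle X^a-1, X^b-1\rangle$, so we may replace the pair $(a,b)$ by $(b, r)$ without enlarging the ideal.

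The plan is then to perform induction on $\max(a,b)$ (equivalently, on the number of steps of the Euclidean algorithm). At each stage we replace the generating set $\{X^a-1, X^b-1\}$ by $\{X^b-1, X^r-1\}$, which generates an ideal contained in the original. The Euclidean algorithm on $(a,b)$ terminates at $(d, 0)$, at which point the corresponding generating set contains $X^d-1$, giving $X^d-1 \in \langle X^a-1, X^b-1\rangle$ as required.

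There is no genuine obstacle here; the only thing to be careful about is making the induction rigorous, since each Euclidean step shrinks the generators of an ideal containment that we are proving in the opposite direction. The cleanest formulation is to prove the auxiliary statement that for any $a,b \geq 1$, $\langle X^a-1, X^b-1\rangle = \langle X^b-1, X^{a \bmod b}-1\rangle$ (with the convention $X^0 - 1 = 0$), and then iterate. Combined with the easy containment from the first paragraph, this yields the desired equality.
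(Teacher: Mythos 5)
Your proof is correct and follows essentially the same route as the paper: the easy containment via $X^d-1 \mid X^a-1$, and the reverse containment by running the Euclidean algorithm on the exponents, using the identity $(X^a-1) - X^{a-b}(X^b-1) = X^{a-b}-1$ (the paper packages the $q$ repeated subtractions into a single polynomial $f_i(X) = \frac{X^{q_ir_i}-1}{X^{r_i}-1}X^{r_{i+1}}$, but this is the same computation). No gaps.
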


\begin{proof}
	The inclusion $\l X^a-1, X^b-1\r \subseteq \l X^d - 1\r$ is clear. We may assume without loss of generality that $a \geq b$. Writing $r_0 = a$, $r_1 = b$, we obtain from the Euclidean algorithm
	\begin{eqnarray}
	r_0 & = & q_1r_1 + r_2 \nonumber \\
	r_1 & = & q_2r_2 + r_3 \nonumber \\
	&\vdots& \nonumber \\
	r_{\ell-1} & = & q_{\ell} r_{\ell} \nonumber
	\end{eqnarray}
	where $r_{\ell} = d$, and $0 \leq r_{i+1} < r_i$ for all $i = 1, 2, \dots, \ell$. Then for $i = 1,\dots,\ell-1$ the polynomial
	\begin{equation*}
	f_i(X) = \frac{X^{q_ir_i}-1}{X^{r_i}-1}\cdot X^{r_{i+1}}
	\end{equation*}
	has integer coefficients, and
	\begin{equation*}
	\left(X^{r_{i-1}}-1\right) - f_i(X)\left(X^{r_i}-1\right) = X^{r_{i+1}}-1.
	\end{equation*}
	Therefore $\left\langle X^{r_{i}}-1, X^{r_{i+1}}-1 \right\rangle \subseteq \left\langle X^{r_{i-1}}-1, X^{r_i}-1 \right\rangle$ for all $0 \leq i < \ell$, and it follows that
	\begin{eqnarray}
	\left\langle X^d-1 \right\rangle = \left\langle X^{r_\ell}-1 \right\rangle & \subseteq & \left\langle X^{r_{\ell-1}}-1, X^{r_{\ell}}-1 \right\rangle \nonumber \\
& \subseteq & \left\langle X^{r_{0}}-1, X^{r_{1}}-1 \right\rangle \nonumber \\ 
& = & \left\langle X^{a}-1, X^{b}-1 \right\rangle \nonumber
	\end{eqnarray}
	which completes the proof of the lemma.
\end{proof}

\section{Proof of Theorem \ref{reform}} \label{sec5}

First let us show that $\left\langle P_{n,d}: d > 1, d\mid n\right\rangle \subseteq \langle \Phi_n(X) \rangle$. We note that if $\zeta_n$ is a primitive $n^\mathrm{th}$ root of unity, then $\zeta_n^{n/d}$ is a primitive $d^\mathrm{th}$ root of unity and thus
\begin{equation*}
	P_{n,d}(\zeta_n) = \sum_{k=0}^{d-1}\zeta_n^{kn/d} = 0.
\end{equation*}
So $P_{n,d} = Q_d\cdot\Phi_n$ for some polynomial $Q_d \in \mathbb{Q}[X]$. But since both $P_{n,d}$ and $\Phi_n$ are monic polynomials in $\Z[X]$, it follows that $Q_d \in \Z[X]$ and therefore $Q_d \in \Z[X]$ for all $d > 1$, $d \mid n$. This shows the inclusion $\left\langle P_{n,d}: d > 1, d\mid n\right\rangle \subseteq \langle \Phi_n(X) \rangle$.

\

The following proposition establishes the inclusion in the other direction.

\begin{prop}
Let $n$ be a positive integer with prime divisors $p_1, p_2, \dots, p_t$. Then there exist polynomials $f_1,f_2,\dots,f_t \in \Z[X]$ such that
\begin{equation*}
\sum_{i=1}^t\frac{X^n-1}{X^{n/p_i}-1}\cdot f_i(X) = \P_n(X).
\end{equation*}
\end{prop}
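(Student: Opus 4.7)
I plan to proceed by induction on $t$, the number of distinct prime divisors of $n$. The base case $t=1$ is immediate: if $n=p^a$ then $\Phi_{p^a}(X) = 1 + X^{p^{a-1}} + \cdots + X^{(p-1)p^{a-1}} = P_{p^a,p}(X)$, so $f_1 = 1$ works. For the inductive step, write $p = p_t$, $a = \ord_p(n)$, and $m = n/p^a$; the inductive hypothesis applied to $m$ yields $g_1,\ldots,g_{t-1} \in \Z[X]$ with $\sum_{i<t} P_{m,p_i}(X)\,g_i(X) = \Phi_m(X)$. Substituting $X \mapsto X^{p^a}$ turns this into $\sum_{i<t} P_{n,p_i}(X)\,g_i(X^{p^a}) = \Phi_m(X^{p^a})$, and Proposition \ref{prep2} gives $\Phi_m(X^{p^a}) = \Phi_n(X)\,\Phi_m(X^{p^{a-1}})$, so
$$(\ast) \qquad \Phi_n(X)\,\Phi_m(X^{p^{a-1}}) \;=\; \sum_{i=1}^{t-1} P_{n,p_i}(X)\,g_i(X^{p^a}).$$

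The next observation is that $\Phi_m(X^{p^{a-1}})$ actually divides each $P_{n,p_i}$ (for $i<t$) in $\Z[X]$. Indeed, iterating Lemma \ref{prep1} gives $\Phi_m(X^{p^{a-1}}) = \prod_{j=0}^{a-1}\Phi_{mp^j}(X)$, while the cyclotomic factorization of $X^n-1$ yields $P_{n,p_i}(X) = \prod_{d\mid n,\,p_i^{a_i}\mid d}\Phi_d(X)$; and for $i<t$ and any $j\leq a-1$ one has $p_i^{a_i}\mid m\mid mp^j$, so every $\Phi_{mp^j}$ appears as a factor of $P_{n,p_i}$. Setting $Q_i := P_{n,p_i}/\Phi_m(X^{p^{a-1}}) \in \Z[X]$ and cancelling $\Phi_m(X^{p^{a-1}})$ in $(\ast)$, we obtain $\Phi_n(X) = \sum_{i<t} Q_i(X)\,g_i(X^{p^a})$. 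It therefore suffices to prove $Q_i \in \langle P_{n,p_i},\, P_{n,p}\rangle$ in $\Z[X]$ for every $i<t$, because substituting such expressions and regrouping then realizes $\Phi_n$ as a $\Z[X]$-linear combination of $P_{n,p_1},\ldots,P_{n,p_t}$, delivering the required $f_i$'s.

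For the containment $Q_i \in \langle P_{n,p_i}, P_{n,p}\rangle$, write $T_i(X) := (X^{n/p_i}-1)/(X^{n/pp_i}-1) \in \Z[X]$ and let $h_i(X) \in \Z[X]$ be the polynomial supplied by Proposition \ref{prep3}, so that $X^{n/p}-1 = (X^{n/pp_i}-1)\,\Phi_m(X^{p^{a-1}})\,h_i(X)$. Starting from $P_{n,p_i}(X^{n/p_i}-1) = X^n-1 = P_{n,p}(X^{n/p}-1)$, substituting this factorization and cancelling $X^{n/pp_i}-1$ on both sides yields
$$(\star\star) \qquad P_{n,p_i}(X)\,T_i(X) \;=\; P_{n,p}(X)\,\Phi_m(X^{p^{a-1}})\,h_i(X).$$
Separately, Lemma \ref{prep4} provides $\alpha_i,\beta_i \in \Z[X]$ with $\alpha_i(X^{n/p_i}-1) + \beta_i(X^{n/p}-1) = X^{n/pp_i}-1$; feeding in the Proposition \ref{prep3} factorization and once more cancelling $X^{n/pp_i}-1$ produces the Bezout identity $\alpha_i T_i + \beta_i\,\Phi_m(X^{p^{a-1}})\,h_i = 1$. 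Multiplying by $P_{n,p_i}$ and using $(\star\star)$ to rewrite $P_{n,p_i}T_i$ gives $P_{n,p_i} = \Phi_m(X^{p^{a-1}})\,h_i\,(\alpha_i P_{n,p} + \beta_i P_{n,p_i})$, and dividing by $\Phi_m(X^{p^{a-1}})$ finally yields $Q_i = h_i\alpha_i P_{n,p} + h_i\beta_i P_{n,p_i}$, as required. The main obstacle is spotting that the same factor $X^{n/pp_i}-1$ may be cancelled on both sides in the derivation of $(\star\star)$ and of the Bezout identity, which is the key move linking Proposition \ref{prep3} to Lemma \ref{prep4}; once this pattern is noticed, the remaining algebra is routine bookkeeping.
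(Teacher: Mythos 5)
Your proof is correct and follows essentially the same route as the paper: induction on the number of distinct prime divisors, the substitution $X \mapsto X^{p^{a}}$ applied to the inductive hypothesis, and the same three ingredients used in the same roles (Proposition \ref{prep2} to produce $\Phi_n$, Proposition \ref{prep3} to supply the integral factor $h_i$, and Lemma \ref{prep4} for the Bezout relation coming from $\gcd(n/p_i,\,n/p) = n/pp_i$). The only difference is bookkeeping: you cancel $\Phi_{np^{-a}}(X^{p^{a-1}})$ out of the generators themselves --- justified by the auxiliary identity $\Phi_{np^{-a}}(X^{p^{a-1}}) = \prod_{j=0}^{a-1}\Phi_{np^{-a}p^{j}}(X)$, which the paper never needs --- and then establish the ideal membership $Q_i \in \langle P_{n,p_i}, P_{n,p}\rangle$, whereas the paper absorbs that same factor into the new coefficients $g_i$ (their integrality being exactly Proposition \ref{prep3}) and regroups; the two computations are equivalent.
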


\begin{proof}
	We use induction on the number of prime divisors $t$ of $n$. The case $t = 1$ simply rephrases Lemma \ref{prep1}. Indeed, if $n = p^m$ for a prime $p$ and a positive integer $m$ then
	\begin{equation*}
		\frac{X^n-1}{X^{n/p} - 1} = \frac{X^{p^m}-1}{X^{p^{m-1}}-1} = \P_p(X^{p^{m-1}}) = \P_{p^m}(X) = \P_n(X)
	\end{equation*}
	where the second equality follows from Lemma \ref{prep1}.

	So assume that the proposition holds for all positive integers having at most $k-1$ distinct prime divisors, and let $n = p_1^{a_1}p_2^{a_2}\cdots p_k^{a_k}$. Then the positive integer $m = np_k^{-a_k}$ has at most $k-1$ distinct prime factors, so by our hypothesis
	\begin{equation*}
		\sum_{i=1}^{k-1} \frac{X^m-1}{X^{m/p_i}-1}\cdot f_i(X) = \Phi_m(X)
	\end{equation*}
	for some polynomials $f_1, f_2, \dots, f_{k-1} \in \Z[X]$. Substituting $X^{p_k^{a_k}}$ for $X$,
	\begin{equation*}
		\sum_{i=1}^{k-1} \frac{X^{mp_k^{a_k}}-1}{X^{mp_k^{a_k}/p_i}-1}\cdot f_i\left(X^{p_k^{a_k}}\right) = \Phi_m\left(X^{p_k^{a_k}}\right)
	\end{equation*}
	or
	\begin{equation}\label{eq1}
		\sum_{i=1}^{k-1} \frac{X^n-1}{X^{n/p_i}-1}\cdot F_i(X) = \Phi_{np_k^{-a_k}}\left(X^{p_k^{a_k}}\right)
	\end{equation}
	where $F_i(X) = f_i\left(X^{p_k^{a_k}}\right) \in \Z[X]$. Now for $1 \leq i \leq k-1$, define the polynomials $g_i$ by
	\begin{equation*}
		g_i(X) = \frac{X^{n/p_k}-1}{\left(X^{n/p_ip_k}-1\right)\Phi_{np_k^{-a_k}}\left(X^{p_k^{a_k-1}}\right)}\cdot F_i(X).
	\end{equation*}
	Then it follows by Proposition \ref{prep3} that $g_i \in \Z[X]$ for all $i$. So by dividing Equation \ref{eq1} by $\Phi_{np_k^{-a_k}}\left(X^{p_k^{a_k-1}}\right)$,
	\begin{equation}\label{eq2}
		\sum_{i=1}^{k-1} \frac{\left(X^n-1\right)\left(X^{n/p_ip_k}-1\right)}{\left(X^{n/p_i}-1\right)\left(X^{n/p_k}-1\right)}\cdot g_i(X) = \frac{\Phi_{np_k^{-a_k}}\left(X^{p_k^{a_k}}\right)}{\Phi_{np_k^{-a_k}}\left(X^{p_k^{a_k-1}}\right)}.
	\end{equation}
	
	\
	
	For each $1 \leq i \leq k-1$ there exist polynomials $a_i, b_i \in \Z[X]$ by Lemma \ref{prep4} such that
	\begin{equation*}
		(X^{n/p_i}-1)a_i(X) + (X^{n/p_k}-1)b_i(X) = (X^{n/p_ip_k}-1).
	\end{equation*}
	Substituting for $(X^{n/p_ip_k}-1)$ in Equation \ref{eq2} and using Proposition \ref{prep2} on the right hand side of the equation,
	\begin{equation*}
		\sum_{i=1}^{k-1} (X^n-1)\cdot\left(\frac{a_i(X)}{X^{n/p_k}-1} + \frac{b_i(X)}{X^{n/p_i}-1}\right)\cdot g_i(X) = \Phi_n(X).
	\end{equation*}
	Upon rearranging the sum we obtain
	\begin{equation*}
		\sum_{i=1}^{k} \frac{X^n-1}{X^{n/p_i}-1}\cdot h_i(X) = \Phi_n(X)
	\end{equation*}
	where for $1 \leq i \leq k-1$,
	\begin{equation*}
		h_i(X) = b_i(X)g_i(X) \in \Z[X]
	\end{equation*}
	and
	\begin{equation*}
		h_k(X) = \sum_{i=1}^{k-1}a_i(X)g_i(X) \in \Z[X].
	\end{equation*}
	This completes the proof of the proposition, and therefore also of Theorem \ref{mainthm}.
\end{proof}

\end{document}